\documentclass[12pt]{amsproc}

 \usepackage{color}

\newtheorem{theorem}{\sc Theorem}[section]
\newtheorem{lemma}[theorem]{\sc Lemma}
\newtheorem{proposition}[theorem]{\sc Proposition}

\newtheorem{problem}[theorem]{\sc Problem}

 \DeclareMathOperator{\soc}{soc}

 \newcommand{\ol}{\overline}

%%%%%%
\newcommand{\la}{\lambda}

\author{Eloisa Detomi}
\address{Dipartimento di Matematica, Universit\`a di Padova, 
 Via Trieste 63, 35121 Padova , Italy}
\email{detomi@math.unipd.it}
\author{Pavel Shumyatsky}
\address{Department of Mathematics, University of Brasilia,
Brasilia-DF, 70910-900 Brazil}
\email{pavel@unb.br}

\title[On the length of a finite group]{On the length of a finite group and of its 2-generator subgroups}
\thanks{The research of the first author was partially supported by GNSAGA.  The research of the second was  supported by CAPES and CNPq.}
 \thanks{2010 {\it Mathematics Subject Classification. }20D10,20F22}
 \thanks{Keywords: finite groups; Fitting height; nonsoluble length}
%20D10   	Solvable groups, theory of formations, Schunck classes, Fitting classes, $\pi$-length, ranks
%20F22   	Other classes of groups defined by subgroup chains
\begin{document}

\begin{abstract} The nonsoluble length $\lambda(G)$ of a finite group $G$ is defined as the minimum number of nonsoluble factors in a normal series of $G$ each of whose quotients either is soluble or is a direct product of nonabelian simple groups. The generalized Fitting height of a finite group $G$ is the least number $h=h^*(G)$ such that $F^*_h(G)=G$, where  $F^*_1(G)=F^*(G)$ is the generalized Fitting subgroup, and $F^*_{i+1}(G)$ is the inverse image of $F^*(G/F^*_{i}(G))$. In the present paper we prove that if $\lambda (J)\leq k$ for every 2-generator subgroup $J$ of $G$, then $\lambda(G)\leq k$. It is conjectured that if $h^*(J)\leq k$ for every 2-generator subgroup $J$, then $h^*(G)\leq k$. We prove that if $h^*(\langle x,x^g\rangle)\leq k$ for all $x,g\in G$ such that $\langle x,x^g\rangle$ is soluble, then $h^*(G)$ is $k$-bounded.

\end{abstract}
\maketitle

\section{Introduction}

Certain properties of a finite group $G$ can be detected by looking at its 2-generator subgroups. For example, it is well-known that $G$ is nilpotent if and only if every 2-generator subgroup of $G$ is nilpotent. A deep theorem of Thompson says that $G$ is soluble if and only if every 2-generator subgroup of $G$ is soluble \cite{thompson} (see also Flavell \cite{flavell}). A number of recent results reflecting the phenomenon that properties of a finite group are determined by its boundedly generated subgroups can be found in \cite{eu10,lmshu,delu}.
 
In the present paper we deal with groups of given nonsoluble length. Every finite group $G$ has a normal series each of whose quotients either is soluble or is a direct product of nonabelian simple groups. In \cite{junta2} the nonsoluble length of $G$, denoted by $\lambda (G)$, was defined as the minimal number of nonsoluble factors in a series of this kind: if
$$
1=G_0\leq G_1\leq \dots \leq G_{2k+1}=G
$$
is a shortest normal series in which  for $i$  even  the quotient $G_{i+1}/G_{i}$ is soluble (possibly trivial), and for $i$ odd the quotient $G_{i+1}/G_{i}$   is a (non-empty) direct product of nonabelian simple groups, then the nonsoluble length $\lambda (G)$ is equal to $k$.

\begin{theorem}\label{a} Suppose that $\lambda (J)\leq k$ for every 2-generator subgroup $J$ of a finite group $G$. Then $\lambda (G)\leq k$.
\end{theorem}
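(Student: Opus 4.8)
The plan is to prove the equivalent assertion that every finite group $G$ contains a $2$-generator subgroup $J$ with $\lambda(J)=\lambda(G)$; since $\lambda$ does not increase on passage to subgroups, this is the same as the stated inequality. I would argue by induction on $|G|$ through a minimal counterexample $G$, for which every proper subgroup $H$ has $\lambda(H)\le k$ and (lifting generators through the quotient map) every nontrivial quotient $G/N$ has $\lambda(G/N)\le k$, while $\lambda(G)\ge k+1$. Using the elementary facts that $\lambda$ is monotone under quotients and that $\lambda(G/N)=\lambda(G)$ whenever $N$ is a soluble normal subgroup, a nontrivial soluble radical would give $\lambda(G)=\lambda(G/R(G))\le k$, a contradiction; hence $R(G)=1$. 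Then $S:=\soc(G)=F^{*}(G)$ is a direct product of nonabelian simple groups with $C_{G}(S)=1$, so $G/S$ embeds into $\out(S)$. As $\out(S)$ is a direct product of groups $\out(T)\wr\sym(n)$ with $\out(T)$ soluble (Schreier) and $\lambda(\sym(n))\le 1$, we get $\lambda(\out(S))\le 1$, hence $\lambda(G/S)\le 1$ and (prepending the semisimple layer $S$ to a shortest series of $G/S$) $\lambda(G)\le 1+\lambda(G/S)\le 2$. Thus $k+1\le 2$, i.e. $k\le1$; the case $k=0$ is exactly Thompson's theorem. Everything therefore reduces to producing, in a group $G$ with $R(G)=1$, $S=\soc(G)$ and $\lambda(G)=2$, a $2$-generator subgroup of nonsoluble length $2$.

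In this situation $G$ acts on the set of simple direct factors of $S$, and the image $\bar P$ of $G$ in the relevant product of symmetric groups is a quotient of $G/S$ by a soluble normal subgroup (the kernel lies in $\prod\out(T_i)$), hence nonsoluble. By Thompson's theorem (the case $k=0$, already proved) $\bar P$ contains a nonsoluble $2$-generator subgroup $Q_{0}=\langle\bar x,\bar y\rangle$. Since $Q_{0}$ is nonsoluble it acts nonsolubly on one of its orbits $\Omega=\{T_{1},\dots,T_{m}\}$; let $Q$ be its image on $\Omega$, a transitive nonsoluble $2$-generator permutation group, and write $S_{\Omega}=\prod_{i\in\Omega}T_{i}$. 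Because $\Omega$ is $Q_{0}$-invariant, $S_{\Omega}$ is normalized by every subgroup of $G$ whose permutation image lies in $Q_{0}$.

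The key point I would isolate is the following sufficiency criterion: if $J\le G$ satisfies $S_{\Omega}\le J$, has permutation image contained in $Q_{0}$, and acts transitively with nonsoluble image on $\Omega$, then $\lambda(J)=2$. To see $\lambda(J)\ge2$, suppose $\lambda(J)\le1$ and take a series $1\le G_{1}\le G_{2}\le J$ with each term normal in $J$, $G_{1}$ soluble, $W:=G_{2}/G_{1}$ a direct product of nonabelian simple groups, and $J/G_{2}$ soluble. On one hand $J$ permutes the simple direct factors of $W$ by conjugation; elements of $G_{2}$ act on $W$ by inner automorphisms, which fix every direct factor, so this action factors through $J/G_{2}$ and has soluble image. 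On the other hand $G_{1}\cap S=1$, so the normal subgroup $S_{\Omega}\cap G_{2}$ of $J$ is a nonsoluble (because $J/G_{2}$ is soluble) $J$-invariant subproduct of $S_{\Omega}$; as $J$ is transitive on $\Omega$ this forces $S_{\Omega}\le G_{2}$, and $S_{\Omega}$ embeds into $W$ as a product of $m$ distinct simple factors which $J$ permutes exactly as it permutes $\Omega$, i.e. with nonsoluble image. This contradiction gives $\lambda(J)\ge2$, and $\lambda(J)\le\lambda(G)=2$.

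It then remains to exhibit $a,b\in G$ with $S_{\Omega}\le\langle a,b\rangle$ and permutation image restricting to $Q$ on $\Omega$; such a $J=\langle a,b\rangle$ contradicts the assumption that every $2$-generator subgroup has $\lambda\le1$, finishing the proof. Starting from lifts $x_{0},y_{0}\in G$ of $\bar x,\bar y$, the subgroup $\langle x_{0},y_{0}\rangle$ maps onto the nonsoluble $Q$, but $\langle x_{0},y_{0}\rangle\cap S_{\Omega}$ need not be all of $S_{\Omega}$ (it could even be a soluble diagonal). I would correct this by replacing $x_{0},y_{0}$ with $x_{0}u,\,y_{0}v$ for suitable $u,v\in S_{\Omega}$, which leaves the permutation image unchanged, and choose $u,v$ so that the whole base $S_{\Omega}$ is generated below $Q$. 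This is precisely the statement that a group of shape $T\wr Q$ — more accurately the relevant subgroup of $\aut(T)\wr Q$ containing the base $\inn(T)^{m}$ — is $2$-generated for a nonabelian simple $T$ and a $2$-generated transitive $Q$, which holds by known results on generation of wreath products. Making this generation step go through inside the actual group $G$, where the factors $T_{i}$ may be glued by outer automorphisms and the action of $Q$ need not split, is in my view the main obstacle and where the bulk of the technical work will lie.
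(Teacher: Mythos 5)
Your reduction to the case $\lambda(G)=2$ is correct and even elegant: in a minimal counterexample $R(G)=1$, so $G/\soc(G)$ embeds in $\out(\soc(G))$, which by Schreier has nonsoluble length at most $1$, whence $k+1=\lambda(G)\le 2$. Your ``sufficiency criterion'' (any $J$ containing $S_\Omega$ with nonsoluble transitive image on $\Omega$ has $\lambda(J)=2$) is also sound and closely parallels Lemma~2.1 of the paper. But the proof is not complete: the entire weight rests on the final generation step --- producing $u,v\in S_\Omega$ with $S_\Omega\le\langle x_0u,\,y_0v\rangle$ --- and you do not prove it; you only gesture at ``known results on generation of wreath products'' and then explicitly concede that transporting such a result into the actual group $G$ ``is the main obstacle and where the bulk of the technical work will lie.'' That concession is accurate: this is precisely the hard point, and it is not covered by the standard $2$-generation of $T\wr Q$. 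In your setting $X=S_\Omega\langle x_0,y_0\rangle$ is generally not monolithic ($C_X(S_\Omega)$ may be nontrivial and carry further minimal normal subgroups), the extension need not split, and $\langle x_0u,y_0v\rangle\cap S_\Omega$ need only be a $Q$-invariant subgroup of $S_\Omega$, not a subproduct --- so diagonal-type intersections are a real possibility and ruling them out requires an argument of Lucchini--Menegazzo/Aschbacher--Guralnick type, not a citation to split wreath products.

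It is instructive to compare with how the paper closes exactly this gap. There the minimal counterexample is arranged so that $T=T_{k+1}(G)$ is the \emph{unique} minimal normal subgroup of $G$; one then takes a $2$-generator subgroup $H/T$ of $G/T$ with $\lambda(H/T)=k$ and splits into two cases. If $H=G$, the Lucchini--Menegazzo theorem ($d(G)=d(G/N)$ for a unique minimal normal subgroup $N$ with $G/N$ noncyclic) applies verbatim and makes $G$ itself $2$-generated --- no delicate choice of correcting elements $u,v$ inside a non-monolithic subgroup is ever needed. If $H<G$, one does not attempt generation at all: instead $\lambda(H)=\lambda(H/T)$ forces $C_H(T)\neq 1$ (the centralizer lemma), contradicting uniqueness of $T$. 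Your route could in principle be completed, but only by proving a generation statement for non-split, non-monolithic extensions of $S_\Omega$ that is essentially equivalent in difficulty to the theorem of \cite{LM}; as written, the argument stops exactly where the theorem's content begins.
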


Recall that the generalized Fitting subgroup $F^*(G)$ of a finite group $G$ is the product of the Fitting subgroup $F(G)$ and all subnormal quasisimple subgroups; here a group is quasisimple if it is perfect and its  quotient by the centre is a nonabelian simple group. Then the \textit{generalized Fitting series} of $G$ is defined starting from  $F^*_0(G)=1$, and then by induction, $F^*_{i+1}(G)$ being  the inverse image of $F^*(G/F^*_{i}(G))$. The least number $h$ such that $F^*_h(G)=G$ is defined as the generalized Fitting height $h^*(G)$ of $G$. Clearly, if $G$ is soluble, then $h^*(G)=h(G)$ is the ordinary Fitting height of $G$. Bounding the generalized Fitting height of a finite group $G$ greatly facilitates using the classification of finite simple groups (and is itself often obtained using the classification). One of such examples is the reduction of the Restricted Burnside Problem to soluble and nilpotent groups in  the Hall--Higman paper \cite{ha-hi}, where the generalized Fitting height was in effect bounded for groups of given exponent (using the classification as a conjecture at the time). A similar example is Wilson's reduction of the problem of local finiteness of periodic profinite groups to pro-$p$ groups in \cite{wil83}.

In view of our Theorem \ref{a} the following problem is natural.

\begin{problem}\label{b} Suppose that $h^*(J)\leq k$ for every 2-generator subgroup $J$ of a finite group $G$. Does it follow that $h^*(G)\leq k$?
\end{problem}

We were not able to answer the above question. On the other hand, the next result seems to be of independent interest.

\begin{theorem}\label{c} Let $G$ be a finite group in which $h^*(\langle x,x^g\rangle)\leq k$ for all $x,g\in G$ such that $\langle x,x^g\rangle$ is soluble. Then $h^*(G)$ is $k$-bounded.
\end{theorem}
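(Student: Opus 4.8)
The plan is to reduce the bound on $h^*(G)$ to two separate bounds: one on the nonsoluble length $\lambda(G)$, and one on the Fitting heights of the soluble factors in a shortest nonsoluble normal series. Indeed, if $1=G_0\le G_1\le\dots\le G_{2\ell+1}=G$ is such a series with $\ell=\lambda(G)$, then climbing the generalized Fitting series shows that $h^*(G)$ is bounded by $\sum_i h(G_{i+1}/G_i)$ over the even (soluble) gaps, plus a bounded contribution per semisimple gap, since each layer of quasisimple subgroups is absorbed in a single step of the $F^*$-series. Thus it suffices to bound $\ell$ and to bound the Fitting height of each soluble section by functions of $k$. I would run the whole argument by induction on $|G|$, using the hypothesis, which is inherited by the subgroups $\langle x,x^g\rangle$ and, with some care, by the relevant sections.

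The core of the argument is the soluble case: if $G$ is soluble and $h(\langle x,x^g\rangle)\le k$ whenever $\langle x,x^g\rangle$ is soluble (which here is automatic), then $h(G)$ is $k$-bounded. I would prove this by taking a minimal counterexample and reducing to the primitive situation $G=V\rtimes H$, where $V$ is a faithful irreducible $H$-module over a prime field and $h(G)=h(H)+1$. By induction $H$ contains a soluble subgroup $W=\langle y,y^{h_0}\rangle$ whose Fitting height is within a bounded amount of $h(H)$. The task is then to enlarge $W$ by one Fitting level inside $V$ while preserving the form ``generated by two conjugate elements'': taking $x=y$ and a conjugator $g\in V$ one has $x^g=y[y,g]$, so $\langle x,x^g\rangle=\langle y,[y,g]\rangle$ with $[y,g]\in[V,y]$, and choosing $g$ so that $[y,g]$ lies in a subspace on which the action raises the Fitting height yields a soluble subgroup of height $\ge h(W)+1$. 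Iterating this threading through a chief series of $V$ realizes essentially the full height $h(G)$ inside a conjugate-generated soluble subgroup, forcing $h(G)$ to be a function of $k$.

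For the nonsoluble length I would argue that if $\lambda(G)$ were large then a soluble subgroup of the form $\langle x,x^g\rangle$ of large Fitting height could be constructed by threading through the nonsoluble layers. In each semisimple layer the classification of finite simple groups provides an element together with a conjugate generating a soluble subgroup that acts nontrivially on the section below; combining these across the $\lambda(G)$ layers, in the same spirit as the soluble step, builds a single soluble $\langle x,x^g\rangle$ whose Fitting height grows with $\lambda(G)$, again contradicting the hypothesis once $\lambda(G)$ exceeds a function of $k$. The main obstacle throughout is precisely the conjugacy constraint: arranging that the two generators which record a long Fitting tower are genuinely conjugate in $G$ rather than arbitrary, and that the subgroup they generate remains soluble while its height keeps increasing. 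Controlling this threading simultaneously with solubility, especially across the nonsoluble layers where one must invoke explicit features of the simple composition factors, is where I expect the real difficulty to lie.
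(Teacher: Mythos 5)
Your overall strategy is the same in spirit as the paper's: control $h^*(G)$ by the nonsoluble length and by Fitting heights of soluble subgroups generated by two conjugate elements. Indeed, the two hard facts you sketch from scratch are precisely the paper's imported Theorems~\ref{uuu} and~\ref{ju}: that every soluble group has a subgroup generated by two conjugates realizing its full Fitting height (proved in \cite{eu10}), and that $\lambda(G)$ is at most the maximal Fitting height of a soluble subgroup (the main theorem of \cite{junta2}, which needs CFSG-dependent work inside the simple layers). Your sketches of both --- ``choosing $g$ so that $[y,g]$ lies in a subspace on which the action raises the Fitting height'' and ``the classification provides an element together with a conjugate generating a soluble subgroup that acts nontrivially on the section below'' --- name exactly the hard steps without carrying them out, so as written they are placeholders rather than proofs. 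That alone would be acceptable if these were quoted as known results, but there is a further structural gap in how you assemble them.

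The genuine gap is in your opening reduction. You bound $h^*(G)$ by $\sum_i h(G_{i+1}/G_i)$ over the soluble gaps plus $\lambda(G)$, which is fine, but you then need to bound the Fitting height of each soluble \emph{section} $G_{i+1}/G_i$, and the hypothesis of the theorem does not pass to sections: if $\langle \bar{x},\bar{x}^{\bar{g}}\rangle$ is a soluble subgroup of $G_{i+1}/G_i$, the corresponding subgroup $\langle x,x^g\rangle$ of $G$ contains elements of $G_i$ in no controlled way and need not be soluble, so the hypothesis $h^*(\langle x,x^g\rangle)\leq k$ simply does not apply there. Your parenthetical ``with some care, by the relevant sections'' is exactly where the argument breaks. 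The paper avoids this with Lemma~\ref{bbou}: a Frattini argument with a Sylow $2$-subgroup of $\soc(G/R(G))$, together with the Feit--Thompson theorem, produces a subgroup $H\leq G$ with $\lambda(H)=\lambda(G)-1$ and $h^*(H)\geq (h^*(G)-1)/2$, and iterating $\lambda(G)$ times yields an honest soluble subgroup of $G$ itself of Fitting height at least $(h^*(G)+1-2^{\lambda})/2^{\lambda}$, to which the hypothesis (via Theorem~\ref{uuu}) does apply. Some device of this kind, replacing quotient sections by genuine subgroups of $G$, is what your proposal is missing; without it the threading argument has nothing to thread through.
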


We use the expression ``$k$-bounded" to mean ``bounded from above by a number depending on $k$ only". Our method of proof of Theorem \ref{c} shows that $h^*(G)\leq(k+1)2^{k}-1$. However we do not think that the bound $(k+1)2^{k}-1$ is anywhere near to being sharp. We conjecture that actually, under the hypothesis of Theorem \ref{c}, we necessarily have $h^*(G)\leq k$.

 Thus, the next question is natural.

\begin{problem}\label{d} Does any finite group $G$ contain a soluble subgroup $J$ such that  $h^*(G)=h(J)$?
\end{problem}

A positive answer to Problem \ref{d} would imply a positive answer to Problem \ref{b}.

The proofs in this paper use the classification of finite simple groups in 
its application to Schreier's Conjecture that the outer automorphism groups of finite simple groups are soluble.

\section{Proof of Theorem \ref{a}}

In what follows we denote by $R(K)$ the soluble radical and by $\soc(K)$ the socle of a group $K$. Recall that $\soc(K)$ is the product of all minimal normal subgroups of $K$. If $G$ is a nonsoluble finite group such that $R(G)=1$, then of course $\lambda(G/\soc(G))=\lambda(G)-1$.

\begin{lemma}\label{lem:olG}
Let $G$ be a group with $\la(G)=1$. Let $N=S_1\times\cdots\times S_t$ be a normal subgroup in $G$ which is a direct product of isomorphic nonabelian simple groups. Then the group of permutations induced by $G$ on the set $\{S_1,\ldots,S_t\}$ is soluble. 
\end{lemma}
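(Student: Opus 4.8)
The plan is to exploit the series witnessing $\la(G)=1$ and to show that the induced permutation action is trivial on everything except the top soluble quotient. Fix a normal series $1=G_0\le G_1\le G_2\le G_3=G$ with $G_1$ soluble, $G_2/G_1$ a direct product of nonabelian simple groups, and $G/G_2$ soluble. Write $\phi\colon G\to\sym(\{S_1,\dots,S_t\})$ for the conjugation action on the factors and $P=\phi(G)$ for its image; the goal is to prove that $P$ is soluble.

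First I would locate $N$ inside this series. Since $N\cap G_1$ is a soluble normal subgroup of $N=S_1\times\cdots\times S_t$ and $N$ has trivial soluble radical, $N\cap G_1=1$. Since the image of $N$ in the soluble group $G/G_2$ is a quotient of $N$, hence a (possibly trivial) direct product of nonabelian simple groups, that image must be trivial, so $N\le G_2$. From $[N,G_1]\le N\cap G_1=1$ I obtain $G_1\le C_G(N)$, so $G_1$ fixes every $S_i$ and $\phi(G_1)=1$; in particular the quotient map $q\colon N\to NG_1/G_1$ is a $G$-equivariant isomorphism.

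The key step is to show that $G_2\le\ker\phi$. Passing to $\ol{G_2}=G_2/G_1$, the image $\ol N=NG_1/G_1$ is a normal subgroup of the direct product of nonabelian simple groups $\ol{G_2}$, hence is the product of a subset of its simple direct factors, and the isomorphism $q$ carries $S_1,\dots,S_t$ bijectively onto these factors. Because any direct product of nonabelian simple groups normalizes each of its own simple direct factors under conjugation, every element of $\ol{G_2}$ fixes each factor of $\ol N$ setwise; by equivariance (and injectivity of $q$) every element of $G_2$ fixes each $S_i$, so $G_2\le\ker\phi$. Consequently $P\cong G/\ker\phi$ is a quotient of the soluble group $G/G_2$, and is therefore soluble.

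The only real obstacle is conceptual rather than computational: one must see \emph{why} the nonsoluble layer $G_2/G_1$ contributes nothing to the permutation action. Indeed, without the hypothesis $\la(G)=1$ the induced permutation group can be highly nonsoluble — e.g.\ for $T\wr\alt(5)$ with $T$ nonabelian simple the five copies of $T$ are permuted by $\alt(5)$ — but there $\la=2$ and the offending permutations come from a second nonsoluble layer sitting above the product of simple groups. The argument above makes precise that when $\la(G)=1$ all permuting of the $S_i$ is forced either into the bottom soluble part $G_1$ (which centralizes $N$) or into the simple-product layer $G_2/G_1$ (which normalizes each component), leaving only the top soluble quotient $G/G_2$ to act, whence solubility.
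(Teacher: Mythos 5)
Your proof is correct, and it is organized around the same idea as the paper's: exhibit a normal subgroup of $G$ contained in the kernel of the permutation representation whose quotient is soluble. The realization differs, though. The paper argues canonically: it first passes to $G/R(G)$, using that $R(G)$ centralizes $N$, then invokes the fact that $\lambda(G/\soc(G))=\lambda(G)-1$ when $R(G)=1$ (so that $G/\soc(G)$ is soluble) and observes that $\soc(G)$ normalizes every $S_i$. You instead work with an arbitrary normal series $1\le G_1\le G_2\le G$ witnessing $\lambda(G)=1$, locate $N$ inside $G_2$ with $N\cap G_1=1$, and check by hand that the semisimple layer $G_2/G_1$ normalizes each simple direct factor of the $G$-equivariantly embedded copy of $N$. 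Your version is slightly longer but more self-contained: it needs only the definition of $\lambda$ and the standard description of normal subgroups of a direct product of nonabelian simple groups, and it bypasses the socle and the paper's unproved ``of course'' remark about $\lambda(G/\soc(G))$. Both arguments ultimately rest on the same two observations: the bottom soluble layer centralizes $N$ because it meets $N$ trivially, and a direct product of nonabelian simple groups normalizes each of its own simple factors, so only the top soluble quotient can act on $\{S_1,\dots,S_t\}$.
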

\begin{proof} 
Let $\ol G$ be the group of permutations induced by $G$ on the set $\{S_1, \ldots,S_t\}$. Since the soluble radical $R(G)$ of $G$ centralises  $N$, and $\la(G)=\la(G/R)$, we can assume that $R(G)=1$. Then, as $\la(G)=1$, it follows that $G/\soc(G)$ is soluble. Since $\soc(G)$ normalises all $S_i$, it follows that $\ol G$ is an homomorphic image of $G/\soc(G)$. Hence $\ol G$ is soluble. 
\end{proof}

\begin{proposition}\label{la=1}
Let $N,M$ be normal subgroups of $G$ such that $\lambda(G/N)\leq\lambda(G/M)\leq1$. Then $\lambda(G/N\cap M)\leq1$. 
\end{proposition}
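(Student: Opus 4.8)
The plan is to reduce to the case $N\cap M=1$ and then to exhibit an explicit normal series of $G$ with a single nonsoluble factor. Passing to $\ov{G}=G/(N\cap M)$ and writing $\ov N=N/(N\cap M)$, $\ov M=M/(N\cap M)$, we have $\ov G/\ov N\cong G/N$ and $\ov G/\ov M\cong G/M$, so both hypotheses are inherited while $\ov N\cap\ov M=1$; since $\lambda(\ov G)\le1$ is precisely the assertion, I may assume from the outset that $N\cap M=1$ and aim to prove $\lambda(G)\le1$.

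Next I set up the canonical layers of the two quotients. Let $R_N$ be the preimage in $G$ of $R(G/N)$ and $L_N$ the preimage of $\soc(G/R_N)$; then $R_N$ and $L_N$ are normal in $G$, the factor $L_N/R_N=\soc(G/R_N)$ is a direct product of nonabelian simple groups (because $G/R_N$ has trivial soluble radical), and by the observation at the start of this section the quotient $G/L_N$ is soluble. Define $R_M,L_M$ symmetrically, and put $D=R_N\cap R_M$ and $L=L_N\cap L_M$, both normal in $G$ with $D\le L$. I claim that $1\le D\le L\le G$ is the desired series. That $G/L$ is soluble is immediate, since $G/L$ embeds into $G/L_N\times G/L_M$, a product of soluble groups. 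That $D$ is soluble follows by a short two-step computation: the image of $D$ in $G/M$ lies in the soluble group $R_M/M$ with kernel $D\cap M=R_N\cap M$, and the image of $R_N\cap M$ in $G/N$ lies in the soluble group $R_N/N$ with kernel $N\cap M=1$; hence $R_N\cap M$, and therefore $D$, is soluble. This is the one place where the reduction $N\cap M=1$ is used.

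It remains to show that $L/D$ is a direct product of nonabelian simple groups, and this is the crux. One first identifies the relevant intersections: $P:=L\cap R_N=L_M\cap R_N$ and $Q:=L\cap R_M=L_N\cap R_M$ satisfy $P\cap Q=D$, so the diagonal map yields a subdirect embedding $L/D\hookrightarrow L/P\times L/Q$. Now $L/P\cong LR_N/R_N$ is a normal subgroup of the semisimple group $L_N/R_N=\soc(G/R_N)$, hence is itself a direct product of nonabelian simple groups; likewise $L/Q$. Consequently $L/D$ is a subdirect product of nonabelian simple groups, and by the standard structure theorem for such subdirect products (a subdirect product of nonabelian simple groups is a direct product of full diagonal subgroups) it is again a direct product of nonabelian simple groups. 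With all three factors of $1\le D\le L\le G$ of the required type, we conclude $\lambda(G)\le1$. The main obstacle is this last step: correctly pinning down $P$ and $Q$ so that $L/D$ is genuinely subdirect, and invoking the semisimplicity of subdirect products of nonabelian simple groups, for which the nonabelian simplicity of the socle factors is essential.
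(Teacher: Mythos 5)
Your proof is correct, and it takes a genuinely different route from the paper's. The paper argues by minimal counterexample: after reducing to the case where $R(G)=1$, $G=G'$, and $M,N$ are minimal normal with $M\cap N=1$, it applies Lemma~\ref{lem:olG} to force $N$ to be simple and then invokes the Schreier Conjecture to split $G=N\times C_G(N)$. You instead exhibit an explicit witnessing series $1\le R_N\cap R_M\le L_N\cap L_M\le G$ built from the radical--socle layers of the two quotients, verify solubility of the outer factors by the two subdirect embeddings $G/L\hookrightarrow G/L_N\times G/L_M$ and $R_N\cap M\hookrightarrow G/N$, and identify the middle factor as a subdirect product of the semisimple groups $L/P$ and $L/Q$ (the computations $P=L_M\cap R_N$, $Q=L_N\cap R_M$, $P\cap Q=D$ all check out, as does the normality of $LR_N/R_N$ in $\soc(G/R_N)$), concluding via the standard Remak/Scott description of subdirect products of nonabelian simple groups as products of full diagonals. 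What your approach buys is notable: it is constructive rather than inductive, and it nowhere uses the Schreier Conjecture, so this particular proposition is obtained without the classification of finite simple groups; the cost is that you must import the subdirect-product structure theorem, whereas the paper's contradiction argument is shorter on the page once Lemma~\ref{lem:olG} and Schreier are available. Minor points worth a sentence in a final write-up: in the degenerate cases (e.g.\ $G/N$ soluble, so $R_N=L_N=G$) the factor $L/D$ may be trivial, which is harmless since the series then just witnesses $\lambda(G)=0$; and the identity $\lambda(G/R_N)=\lambda(G/N)$ used to see that $G/L_N$ is soluble is the standard fact $\lambda(X)=\lambda(X/R(X))$ already quoted in this section.
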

\begin{proof} Suppose that $G$ is a counterexample of minimal order. Then  $M\cap N=1$. 
Since $G$ is a counterexample of minimal order, it follows that $\lambda(G/L)\leq1$ for any nontrivial 
  normal subgroup of $G$. In particular, the soluble radical $R(G)$ of $G$ must be trivial because $\lambda(G)=\lambda(G/R(G))$. Without loss of generality we can assume that $M$ and $N$ are minimal normal subgroups in $G$. Let $G'$ be the derived subgroup of $G$. Since $\lambda(G')=\lambda(G)$, because of minimality of $|G|$ we have $G'=G$ 
  and $\lambda(G/N)=\lambda(G/M)=1$.
 
Let $N=S_1\times\cdots\times S_t$, where $S_i$ are isomorphic nonabelian simple groups. Since $M$ centralises $N$ and $\lambda(G/M)=1$, Lemma \ref{lem:olG} tells us that the permutation  group $\ol G$ induced by $G$ on 
 $\{S_1, \ldots , S_t\}$ is soluble. Taking into account that $G=G'$, we deduce that $\ol G=1$. 
 
Therefore $t=1$ and $N$ is simple. From the Schreier Conjecture combined with the fact that $G=G'$ we now deduce that $G/C_G(N)$ acts on $N$ by inner automorphisms. Hence, $G=N\times C_G(N)$. Since $\lambda(C_G(N))=\la(N)=1$, the result follows.
 \end{proof} 

Given a finite group $G$, we define $T(G)$ as the intersection of all normal subgroups $N$ of $G$ such that $\lambda(G/N)\leq1$. It is easy to deduce from   Proposition \ref{la=1} that $\la(G/T(G))\leq1$ and $\la(G/T(G))=1$ if and only if $G$ is nonsoluble. 
   
Set $T_1(G)=G$ and, by  induction, $T_{i+1}(G)= T(T_{i}(G))$. In view of Proposition \ref{la=1}, it is clear that
 if $T_{i-1}(G) \neq 1$, then  $ T_i(G)$ is the minimal normal subgroup $N$ of $G$ such that $\lambda(G/N)=i-1$. Moreover, $\lambda(T_{i}(G)/T_{i+1}(G))=1$ and $T_{i}(G)$ is perfect   for every $i\geq1$ such that $T_i(G) \neq 1$.

\begin{lemma}\label{zero}
Let $G$ be a finite group with nonsoluble length $\lambda$. Then for every positive integer  $n\le\lambda$ there exists a subgroup $H$ in $G$ such that $\lambda(H)=n$. 
\end{lemma}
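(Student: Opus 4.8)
The plan is to read off the required subgroup $H$ directly from the descending chain of characteristic subgroups $T_i(G)$ already constructed, by computing $\lambda$ of each $T_j(G)$ exactly. The first ingredient I would establish is the shift identity $T_i(T_j(G))=T_{i+j-1}(G)$, valid for all $i,j\geq1$ (as long as the right-hand side is defined). This follows by induction on $i$: the base case $T_1(T_j(G))=T_j(G)$ is the definition, and the inductive step reads $T_{i+1}(T_j(G))=T(T_i(T_j(G)))=T(T_{i+j-1}(G))=T_{i+j}(G)$, using only $T_{m+1}(G)=T(T_m(G))$.

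Next I would pin down where the chain terminates. Using $\lambda(G/T_i(G))=i-1$ whenever $T_{i-1}(G)\neq1$, I argue that $T_i(G)\neq1$ for every $i\leq\lambda$: if $i$ were the least index with $T_i(G)=1$, then $T_{i-1}(G)\neq1$ would give $\lambda(G)=\lambda(G/T_i(G))=i-1\leq\lambda-1$, contradicting $\lambda(G)=\lambda$. Since then $T_\lambda(G)\neq1$, the subgroup $T_{\lambda+1}(G)$ is the minimal normal $N$ with $\lambda(G/N)=\lambda$; as $N=1$ already satisfies this, we get $T_{\lambda+1}(G)=1$, while $T_\lambda(G)\neq1$.

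The heart of the argument is then the formula $\lambda(T_j(G))=\lambda+1-j$ for $1\leq j\leq\lambda$. Here I would apply the general identity $\lambda(K/T_i(K))=i-1$, which holds for every finite group $K$ provided $T_{i-1}(K)\neq1$, to the group $K=T_j(G)$ with the index $i=\lambda+2-j$. By the shift identity $T_i(K)=T_{i+j-1}(G)=T_{\lambda+1}(G)=1$, and moreover $T_{i-1}(K)=T_{i+j-2}(G)=T_\lambda(G)\neq1$, so the identity is applicable and yields $\lambda(T_j(G))=\lambda(K/T_i(K))=i-1=\lambda+1-j$. Finally, given any $n$ with $1\leq n\leq\lambda$, I set $H=T_{\lambda+1-n}(G)$; since $1\leq\lambda+1-n\leq\lambda$ this subgroup is nontrivial, and the formula gives $\lambda(H)=\lambda+1-(\lambda+1-n)=n$, as required.

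The one point demanding care is the third step: I must be sure that the properties of the operators $T_i$ derived for $G$ transfer verbatim to the subgroup $T_j(G)$ (they do, being general facts about finite groups) and that the indices line up correctly through the shift identity. Everything else is routine bookkeeping, so I expect no genuine obstacle beyond keeping the index arithmetic consistent.
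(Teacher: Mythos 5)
Your proposal is correct and takes exactly the paper's route: the subgroup you produce, $T_{\lambda+1-n}(G)$, is precisely the $T_{\lambda-n+1}(G)$ named in the paper's one-line proof, and your shift identity plus the computation $\lambda(T_j(G))=\lambda+1-j$ is just the detailed verification the paper leaves implicit. One small caveat: the identity $\lambda(K/T_i(K))=i-1$ under the sole hypothesis $T_{i-1}(K)\neq 1$ fails at $i=2$ when $K$ is nontrivial soluble, but in each of your applications $K$ is nonsoluble (either $G$ itself with $\lambda(G)\geq 1$, or a nontrivial perfect $T_j(G)$ with $j\geq 2$), so the argument goes through.
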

\begin{proof} For example, the subgroup $T_{\la-n+1}(G)$ has the required property.
\end{proof}

\begin{lemma}\label{lem:Centralizer} Let $N$ be a  normal subgroup of $G$. If $N$ is a direct product of nonabelian simple groups and 
 $\lambda(G/N)=\lambda(G)$, then $C_G(N)\neq1$. 
\end{lemma}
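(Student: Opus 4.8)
The plan is to prove the contrapositive: assuming $C_G(N)=1$ (and $N\neq1$, the case $N=1$ being trivial whenever $G\neq1$), I will show that $\lambda(G/N)<\lambda(G)$, contradicting the hypothesis $\lambda(G/N)=\lambda(G)$. The whole argument is arranged so as to place $G$ into the situation recorded at the beginning of this section, namely a nonsoluble group with trivial soluble radical, for which $\lambda(G/\soc(G))=\lambda(G)-1$. Thus the two things I need to establish from $C_G(N)=1$ are that $R(G)=1$ and, crucially, that $\soc(G)=N$.

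First I would check that $R(G)=1$. Since $N$ is a direct product of nonabelian simple groups, it has no nontrivial soluble normal subgroup, so $R(G)\cap N=1$; as $R(G)$ and $N$ are both normal in $G$, this gives $[R(G),N]\le R(G)\cap N=1$, whence $R(G)\le C_G(N)=1$. Next I would identify the socle. If $L$ is any minimal normal subgroup of $G$ with $L\cap N=1$, then again $[L,N]=1$ and $L\le C_G(N)=1$, which is impossible; hence every minimal normal subgroup of $G$ lies in $N$, giving $\soc(G)\le N$. Conversely, $G$ permutes the simple direct factors of $N$ by conjugation, and the product of the factors in a single $G$-orbit is a minimal normal subgroup of $G$; as $N$ is the product of these orbit-subgroups, $N\le\soc(G)$. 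Therefore $\soc(G)=N$.

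With these reductions the conclusion is immediate: $G$ is nonsoluble (because $N\neq1$) and $R(G)=1$, so by the observation recorded at the start of this section $\lambda(G/N)=\lambda(G/\soc(G))=\lambda(G)-1<\lambda(G)$, the desired contradiction. I expect the only genuinely substantive step to be the equality $\soc(G)=N$, rather than the mere inclusion $\soc(G)\le N$: both halves are needed, since the quoted identity concerns $G/\soc(G)$, and passing instead to the possibly larger quotient $G/N$ would only yield the useless bound $\lambda(G/N)\ge\lambda(G)-1$. The faithfulness encoded in $C_G(N)=1$ is precisely what rules out extra minimal normal subgroups — abelian ones would lie in $R(G)=1$, while nonabelian ones disjoint from $N$ would centralise $N$ — and so pins the socle down to $N$.
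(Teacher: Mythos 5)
Your proof is correct, and it takes a genuinely different route from the paper's. The paper argues directly rather than by contraposition: it sets $T=T_{\lambda}(G)$, the last nontrivial term of the series built from Proposition \ref{la=1}, observes that $\lambda(G/T)=\lambda(G)-1$ together with $\lambda(G/N)=\lambda(G)$ forces $T\not\leq N$, and then extracts a nontrivial subgroup of $T$ centralising $N$: either the soluble radical $R(T)$ (which meets $N$ trivially and hence centralises it), or, when $T$ is a direct product of nonabelian simple groups, a minimal normal subgroup $T_0\leq T$ with $T_0\not\leq N$ --- using exactly the dichotomy you use, that a minimal normal subgroup either lies in $N$ or centralises it. Your version instead assumes $C_G(N)=1$, deduces $R(G)=1$, pins down $\soc(G)=N$, and invokes the identity $\lambda(G/\soc(G))=\lambda(G)-1$ recorded at the top of Section 2. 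What your argument buys is independence from the $T$-series machinery (Proposition \ref{la=1} and the subgroups $T_i(G)$ are not needed at all); the price is that you must establish \emph{both} inclusions $\soc(G)\leq N$ and $N\leq\soc(G)$ --- you correctly flag the orbit-product argument for the latter as the substantive step --- whereas the paper's proof never has to identify the full socle, only to exhibit one nontrivial normal subgroup not contained in $N$. Both arguments are complete and elementary modulo the standard facts they quote.
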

\begin{proof} Let $\lambda=\lambda(G)$ and $T=T_\lambda(G)$. Thus, $\lambda(G/T)=\la-1$. Since $\lambda(G/N)=\lambda$, it is clear that  $T$ can not be a subgroup of $N$. If the soluble radical $R(T)$ is nontrivial, we have $R(T)\leq C_G(N)\neq1$. 
Otherwise, $T$, being perfect, is a direct product of nonabelian simple groups. In particular $T$ is a product of the minimal normal subgroups of $G$   contained in $T.$ % (see e.g. Lemma 4.14A doerk-hawkes).
 If $T_0\le T$ is a minimal normal subgroup of $G$, then either  $T_0$ 
 centralizes $N$ or $T_0=[T_0,N]\le N$. Since $T$ is not contained in $N$, we deduce that   $C_G(N)\neq1$. 
\end{proof}

In what follows we let $d(G)$ denote the minimal size of a generating set of a group $G$. We will require the following well-known theorem.
\begin{theorem}\label{L-M} Let the finite group $G$ have a unique minimal normal subgroup $N$ and assume that $G/N$ is noncyclic. Then $d(G)=d(G/N)$.
\end{theorem}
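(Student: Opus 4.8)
The plan is to prove the inequalities $d(G)\ge d(G/N)$ and $d(G)\le d(G/N)$ separately. The first is immediate, since the images of any generating set of $G$ generate $G/N$; the whole content lies in the reverse inequality, and this is exactly where the hypothesis that $G/N$ be noncyclic, i.e.\ $d:=d(G/N)\ge2$, is used. Throughout I would fix $x_1,\dots,x_d\in G$ whose images generate $G/N$ and count the lifts $(x_1n_1,\dots,x_dn_d)$, with $n_i\in N$, that generate $G$, aiming to show this number is positive. Such a lift fails to generate $G$ precisely when it lies in a maximal subgroup $M$ with $MN=G$, since a maximal subgroup containing $N$ cannot contain the lift (the images $x_iN$ generate $G/N$). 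The easy case is $N\le\frat(G)$: then $HN=G$ forces $H=G$ by the defining property of the Frattini subgroup, so every lift generates $G$. This already covers central $N$, for if $N\le\z(G)$ but $N\not\le\frat(G)$ then $N$ is a direct factor $G=N\times K$ with $K\ne1$, and a minimal normal subgroup of $K$ would be a second minimal normal subgroup of $G$, against uniqueness.

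Next I would treat $N$ abelian with $N\not\le\frat(G)$. Then $N$ is complemented, $G=N\rtimes H$, and $N$ is an irreducible $H$-module. Uniqueness of the minimal normal subgroup forces $H$ to act \emph{faithfully} (otherwise the kernel of the action is a normal subgroup meeting $N$ trivially, producing a second minimal normal subgroup), so $C_G(N)=N$ and $N$ is nontrivial. Here the maximal subgroups $M$ with $MN=G$ are exactly the complements to $N$, and a short argument shows each complement contains precisely one of our lifts; this gives a bijection between the bad lifts and the complements, whose number equals $\abs{N}\cdot\abs{\h(H,N)}$. Thus good lifts exist as soon as $\abs{N}^{d}>\abs{N}\cdot\abs{\h(H,N)}$, and for $d\ge2$ this follows from the Aschbacher--Guralnick cohomology bound $\abs{\h(H,N)}<\abs{N}$ for a faithful module, the faithfulness being exactly what uniqueness provided.

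Finally, the case $N=S_1\times\cdots\times S_t$ nonabelian. Such $N$ is never contained in the nilpotent subgroup $\frat(G)$, so only the complemented regime occurs, and uniqueness gives $C_G(N)=1$, whence $G\hookrightarrow\aut(N)=\aut(S)\wr\sym(t)$ and $G/N\hookrightarrow\out(S)\wr\sym(t)$. The bad lifts again lie in the maximal supplements to $N$, but these now include not only complements but also ``diagonal'' subgroups $M$ with $1\ne M\cap N$ a proper subdirect product of the $S_i$. I would bound the number of such supplements below $\abs{N}^d$ for $d\ge2$, exploiting that each $S_i$ is a $2$-generated nonabelian simple group and that $\out(S)$ is soluble by Schreier's Conjecture, which controls the size of $G/N$ inside the wreath product.

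The step I expect to be the main obstacle is precisely this last count: the enumeration of diagonal supplements to a nonabelian $N$ and the verification that their number is \emph{strictly} smaller than $\abs{N}^d$ at the threshold $d=2$. This is where noncyclicity of $G/N$ is indispensable --- for cyclic $G/N$ (the case $d=1$) the inequality genuinely fails, as $G=\sym(3)$ with $N=\alt(3)$ shows, where $d(G)=2>1=d(G/N)$ --- and where the solubility of $\out(S)$ enters to keep $G/N$, and hence the number of supplements, under control.
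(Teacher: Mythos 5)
First, note that the paper itself gives no proof of this theorem: it is quoted from the literature, with \cite{AG} covering the case of abelian $N$ and \cite{LM} the general case. Your proposal reconstructs the abelian half correctly and essentially along the lines of \cite{AG}: the Frattini reduction, the identification of the maximal supplements of an abelian minimal normal subgroup with its complements, the count $\abs{N}\cdot\abs{\h(H,N)}$ of complements, the use of uniqueness of $N$ to force $C_G(N)=N$ (hence faithfulness of the $H$-module), and the bound $\abs{\h(H,N)}<\abs{N}$ are all sound, and the $\sym(3)$ example correctly locates where noncyclicity of $G/N$ enters.

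The genuine gap is the nonabelian case, and it is not a small one: that case is the entire content of \cite{LM}, and your sketch stops exactly at its threshold. Two concrete problems. First, your taxonomy of maximal supplements of $N=S_1\times\cdots\times S_t$ is incomplete: besides complements and subgroups $M$ with $M\cap N$ a subdirect (``diagonal'') product, there are product-type maximal subgroups with $M\cap N=R_1\times\cdots\times R_t$ for proper subgroups $R_i<S_i$; already for $t=1$ these are the generic ones. Second, and more seriously, the needed inequality cannot be obtained by naively enumerating supplements, since their number can far exceed $\abs{N}$. The workable estimate is per conjugacy class: each conjugate $M^g$ of a maximal supplement meets every coset $x_iN$ in exactly $\abs{M\cap N}=\abs{N}/[G:M]$ elements, so the lifts trapped in conjugates of $M$ number at most $\abs{N}^d/[G:M]^{d-1}$, and for $d=2$ one must show that the sum of $[G:M]^{-1}$ over conjugacy classes of maximal supplements is less than $1$. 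Controlling the number of such classes of each index is precisely where the classification enters (2-generation of simple groups, solubility of $\out(S)$, and finer information about maximal subgroups), and none of that is carried out in your proposal. As written, you have a correct proof of the abelian case and an announcement of a strategy for the nonabelian one; the latter is the hard theorem the paper is citing, so the proposal cannot be accepted as a proof of the stated result.
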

The above theorem was proved in \cite{AG} in the case where $N$ is abelian and in \cite{LM} in the general case.

 We are now ready to prove Theorem \ref{a}.

%%%%%%%%%%%%%%%%%%%%
\begin{proof}[Proof of Theorem \ref{a}] Recall that $\lambda (J)\leq k$ for every 2-generator subgroup $J$ of $G$. Our aim is to show that $\lambda(G)\leq k$. Let $G$ be a counterexample of minimal possible order. Then $\lambda(G)=k+1$. In view of Thompson's theorem \cite{thompson}, $k\geq1$. We deduce from Lemma \ref{zero} that $\lambda(H)\le k$ for every proper subgroup $H<G$. Further, whenever $N$ is a 
 nontrivial normal subgroup, we have $\lambda(G/N)\le k$. Let $T=T_{k+1}(G)$. It follows that $T$ is contained in each 
 nontrivial 
normal subgroup of $G$. Therefore $T$ is a unique minimal normal subgroup in $G$. Since $T=T'$, we conclude that $T$ is a direct product of isomorphic nonabelian simple groups.

By minimality of $G$,  the quotient $G/T$ has a 2-generator subgroup $H/T$ such that $\la(H/T)=k$. If $H=G$, then, by Theorem \ref{L-M}, the group $G$ is 2-generator and we have a contradiction. Assume that $H\neq G$. Since $H$ is a proper subgroup, we have $\lambda(H)\le k$. Since the image of $H$ in $G/T$ has nonsoluble length $k$, we conclude that $\la(H)=k$. Hence, $\la(H/T)=\la(H)$. Lemma \ref{lem:Centralizer} now tells us that $C_H(T)\neq1$. It follows that $C_G(T)\neq1$. Since $T$ is a direct product of nonabelian simple groups and since $T$ is a unique minimal normal subgroup in $G$, the centralizer $C_G(T)$ must be trivial. This is a contradiction. The proof is now complete.
\end{proof}

\section{Proof of Theorem \ref{c}}

Given a finite group $G$, we denote by $K(G)$ the intersection of all normal subgroups $N$ such that $F^*(G/N)=G/N$. By \cite[Lemma X.13.3(c)]{hubla} $F^*(G/K(G))=G/K(G)$.

 Define the series
$$K_1(G)=G, \text{ and } K_{i+1}(G)=K(K_i(G))\ for\ i=1,2,\dots.$$
If $h=h^*(G)$, we have the usual inclusions $K_i (G)\leq F^*_{h-i+1}(G)$ for $i=1,2,\dots,h+1$. Moreover, it is clear that $h^*(G)=i+h^*(G/F^*_i(G))$ and $h^*(G)=i+h^*(K_i(G))-1$.

\begin{lemma}\label{bbou} Let $\lambda$ and $h$ be nonnegative integers and $G$ a finite group such that $\lambda(G)=\lambda$ and $h^*(G)=h$. Then $G$ contains a soluble subgroup $B$ such that $h(B)\geq \frac{h+1-2^\lambda}{2^\lambda}$.
\end{lemma}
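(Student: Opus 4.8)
The plan is to argue by induction on $\lambda=\lambda(G)$, proving the equivalent statement that $h^*(G)\le 2^{\lambda}(m(G)+1)-1$, where $m(X)$ denotes the largest Fitting height of a soluble subgroup of $X$. Taking $B$ to be a soluble subgroup of $G$ realising $m(G)$ then rearranges into the asserted inequality $h(B)\ge (h+1-2^\lambda)/2^\lambda$. When $\lambda=0$ the group $G$ is soluble, so one takes $B=G$ and $h^*(G)=h(G)=m(G)$, which is the bound with $2^0=1$.

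For the inductive step I would peel off the bottom nonsoluble layer. Put $R=R(G)$ and let $S$ be the preimage in $G$ of $\soc(G/R)$; since $R(G/R)=1$ the quotient $S/R$ is a nontrivial direct product of nonabelian simple groups, and by the remark preceding Lemma \ref{lem:olG} (applied to $G/R$, which is nonsoluble with trivial radical) we have $\lambda(G/S)=\lambda(G/R)-1=\lambda-1$, using $\lambda(G/R)=\lambda(G)=\lambda$. The generalized Fitting height is sub-additive along a normal subgroup — any normal series with quasinilpotent quotients has length at least $h^*$, because a quasinilpotent normal subgroup is contained in $F^*$, so one may splice the generalized Fitting series of a normal subgroup with the pullback of that of the quotient. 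This gives $h^*(G)\le h^*(S)+h^*(G/S)$ together with $h^*(S)\le h(R)+h^*(S/R)=h(R)+1$, whence $h^*(G)\le h(R)+1+h^*(G/S)$, and by induction $h^*(G/S)\le 2^{\lambda-1}(m(G/S)+1)-1$.

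The crux is to pass from soluble subgroups of the quotient $G/S$ back to soluble subgroups of $G$, that is, to establish $m(G/S)\le m(G)$. For this I would prove the lifting claim: whenever $V\trianglelefteq X$ with $X/V$ soluble, the group $X$ has a soluble subgroup $B$ with $BV=X$. Indeed, choose $B$ minimal subject to $BV=X$; for a maximal subgroup $M<B$, normality of $B\cap V$ in $B$ forces $M(B\cap V)=B$ whenever $B\cap V\not\le M$, which would give $MV=X$ against minimality, so $B\cap V\le\Phi(B)$ is nilpotent while $B/(B\cap V)\cong X/V$ is soluble, making $B$ soluble and onto $X/V$. Applying this with $X$ the preimage in $G$ of a soluble subgroup $\bar B\le G/S$ and $V=S$ produces a soluble subgroup of $G$ of Fitting height at least $h(\bar B)$, so $m(G/S)\le m(G)$.

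Combining the two displays, and writing $m=m(G)$, I obtain
$$h^*(G)\le m+1+\big(2^{\lambda-1}(m+1)-1\big)=(m+1)\big(1+2^{\lambda-1}\big)-1\le 2^{\lambda}(m+1)-1,$$
since $1+2^{\lambda-1}\le 2^{\lambda}$ for $\lambda\ge1$, completing the induction. The main obstacle is exactly the lifting claim of the third paragraph: without it the induction on $G/S$ would only bound a quotient, whereas the statement demands a soluble subgroup of $G$ itself. It is this minimal-supplement (Frattini) argument that keeps the witnessing subgroup inside $G$, at the cost of at most one extra factor of two per nonsoluble layer, which is precisely where the $2^{\lambda}$ comes from.
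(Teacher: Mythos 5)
Your argument is correct, but it takes a genuinely different route from the paper's. The paper keeps the induction inside subgroups: it takes a Sylow $2$-subgroup $T$ of $\soc(G/R(G))$, uses the Frattini argument and the Feit--Thompson theorem to produce the preimage $H$ of $N_{G/R(G)}(T)$ with $\lambda(H)=\lambda-1$, and then the delicate step is showing $h^*(H)\geq (h-1)/2$ via the lower series $K_j(G)$ --- that halving is exactly where the paper's $2^\lambda$ comes from, and soluble subgroups of $H$ are automatically soluble subgroups of $G$. You instead pass to the quotient $G/S$, control $h^*$ by subadditivity along the normal series $1\leq R\leq S\leq G$ (which is a legitimate, if not explicitly stated, consequence of the fact that $F^*(G)$ contains every normal quasinilpotent subgroup), and then recover a soluble subgroup of $G$ itself by the minimal-supplement argument $B\cap S\leq\Phi(B)$. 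This avoids Feit--Thompson entirely and, more interestingly, loses only an additive $m+1$ per nonsoluble layer rather than a multiplicative factor of $2$: your recursion $h^*(G)\leq (m+1)+h^*(G/S)$ actually yields $h^*(G)\leq(\lambda+1)(m(G)+1)-1$, which is stronger than the stated $2^{\lambda}(m(G)+1)-1$ (you only weaken at the last line to match the lemma). Carried into Theorem \ref{c}, this would replace the bound $h^*(G)\leq(k+1)2^{k}-1$ by a bound quadratic in $k$, so you may want to record the sharper form rather than discard it.
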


\begin{proof} If $\lambda=0$, the result is obvious, so we can assume that $\lambda\geq1$ and use induction on $\lambda$. Let $R=R(G)$ and $\bar{G}=G/R$. Choose a Sylow 2-subgroup $T$ in $\soc(\bar{G})$. By the Frattini lemma $\bar{G}=\soc(\bar{G})N_{\bar{G}}(T)$. Let $H$ be the inverse image of $N_{\bar{G}}(T)$ in $G$. We will show that $\lambda(H)=\lambda-1$ and $2h^*(H)\geq h-1$.

 By the Feit-Thompson Theorem \cite{fei-tho}
$\soc(\bar{G})\cap N_{\bar{G}}(T)$ is soluble. Let $S/R=\soc(\bar{G})$. Then $S\cap H$ is soluble. We know that $H/(S\cap H)$ is isomorphic with $G/S$ and that $\lambda(G/S)=\lambda-1$. Hence, $\lambda(H)=\lambda-1$.

We will now prove that $2h^*(H)\geq h-1$. Let $j$ be the minimal number such that $K_j=K_j(G)$ is soluble. We notice that $h(K_j)=h-j+1$. Hence $h^*(H)\geq h-j+1$. Since $K_{j-1}(G)$ is not contained in $R$, it follows that $K_{j-1}(\bar{G})$ is a nontrivial subgroup contained in $\soc(\bar{G})$ while $K_{j-2}(\bar{G})$ is not contained in $\soc(\bar{G})$. Since $\bar{G}=\soc(\bar{G})N_{\bar{G}}(T)$, it follows that $K_{j-2}(H)\neq1$. Hence $h^*(H)\geq j-2$. Combining this with the fact that $h^*(H)\geq h-j+1$, we conclude that $h^*(H)\geq\frac{h-1}{2}$.

By induction, $H$ contains a soluble subgroup $B$ such that $$h(B)\geq \frac{h^*(H)+1-2^{\lambda-1}}{2^{\lambda-1}}.$$ Since $h^*(H)\geq\frac{h-1}{2}$, we have $$h(B)\geq\frac{h^*(H)+1-2^{\lambda-1}}{2^{\lambda-1}}\geq\frac{h+1-2^\lambda}{2^\lambda}.$$ The proof is now complete.
\end{proof}
We will require the following results obtained in \cite{junta2} and \cite{eu10}, respectively.

\begin{theorem}\label{ju} The nonsoluble length $\lambda(G)$ does not exceed the maximum Fitting height of soluble subgroups of a finite group $G$.
\end{theorem}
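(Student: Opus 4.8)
The plan is to prove the equivalent assertion that $G$ contains a soluble subgroup $B$ with $h(B)\ge\la(G)$, arguing by induction on $|G|$ (equivalently on $\la(G)$). If $G$ is soluble there is nothing to prove, so assume $\la=\la(G)\ge1$. The first move is to reduce to the case $R(G)=1$: if $\ol B\le G/R(G)$ is soluble of Fitting height $m$, then its full preimage $B$ in $G$ is soluble and, since Fitting height does not increase under passage to quotients, $h(B)\ge h(\ol B)=m$. Hence it suffices to find a soluble subgroup of large Fitting height in $G/R(G)$, and we may assume $R(G)=1$. Then $\soc(G)=S_1\times\cdots\times S_t$ is a direct product of nonabelian simple groups, $C_G(\soc(G))=1$, and $\la(G/\soc(G))=\la-1$.

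The second move is the Sylow $2$-normalizer trick already used in Lemma \ref{bbou}. Let $T$ be a Sylow $2$-subgroup of $\soc(G)$ and set $N=N_G(T)$. By the Frattini argument $G=\soc(G)\,N$, so $N/(N\cap\soc(G))\cong G/\soc(G)$ has nonsoluble length $\la-1$. Moreover $N\cap\soc(G)=N_{\soc(G)}(T)$ has $T$ as a normal Sylow $2$-subgroup with quotient of odd order, so it is soluble by the Feit--Thompson theorem \cite{fei-tho}; consequently $\la(N)=\la-1$. Since $T$ is not normal in $\soc(G)$ we have $N<G$, and the inductive hypothesis applied to $N$ yields a soluble subgroup $B_0\le N$ with $h(B_0)\ge\la-1$.

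It remains to raise the Fitting height by one, producing a soluble subgroup of $G$ of height $\ge\la$, and this is where the real work lies. The naive attempt---adjoining the normal $2$-group $T$ to $B_0$---can fail to increase the height: if the bottom Fitting layer of $B_0$ happens to be a $2$-group it may simply merge with $T$. The point to exploit is that $B_0$ normalizes the soluble group $V=N_{\soc(G)}(T)$, that $V$ genuinely lies in the bottom nonsoluble section $\soc(G)$, and that $B_0$ acts faithfully on $\soc(G)$ because $C_G(\soc(G))=1$. The plan is to choose the inductively supplied $B_0$ so that its lowest Fitting layer does not share the prime $2$ with the layer contributed by $\soc(G)$---concretely, to interleave the Fitting series of $B_0$ with a fresh bottom layer drawn from $V$ (its $2$-part $T$ together with the odd-order complement $V/T$), so that the descent through the single nonsoluble factor $\soc(G)$ contributes exactly one new, non-collapsing Fitting layer. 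Carrying this through shows $h(\langle V,B_0\rangle)\ge h(B_0)+1\ge\la$.

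The main obstacle is precisely this last boosting step: one must guarantee that passing through a nonsoluble chief section forces a \emph{strict} increase of the Fitting height of the soluble subgroup being built, rather than an absorption of the new layer into the old one. I expect this to require strengthening the inductive statement so that it also records the prime (or the nilpotency type) of the bottom Fitting layer of the soluble subgroup produced, allowing the alternation to be maintained at every level of the recursion; the faithfulness $C_G(\soc(G))=1$ and the Feit--Thompson solubility of Sylow $2$-normalizers inside the socle are the two facts that make such an alternation available.
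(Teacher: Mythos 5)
First, a remark on scope: this paper does not prove Theorem \ref{ju} at all --- it is imported verbatim from \cite{junta2} --- so there is no in-paper proof to compare against; one can only compare with the related technique the authors do carry out in Lemma \ref{bbou}. Your reduction steps are correct and coincide with that technique: passing to $G/R(G)$ costs nothing, the Frattini argument with a Sylow $2$-subgroup $T$ of $\soc(G)$ gives $G=\soc(G)N_G(T)$, Feit--Thompson makes $N_{\soc(G)}(T)$ soluble, hence $\la(N_G(T))=\la(G)-1$, and induction supplies a soluble $B_0\le N_G(T)$ with $h(B_0)\ge\la-1$.

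The problem is that the proof stops exactly where the theorem begins. The entire content of the statement is the boosting step $h(\langle V,B_0\rangle)\ge h(B_0)+1$ with $V=N_{\soc(G)}(T)$, and you explicitly leave it as something you ``expect'' can be arranged by strengthening the induction. As you yourself note, the naive inequality $h(B)\ge h(B/W)+1$ for a nontrivial soluble normal subgroup $W$ is false in general (direct products already defeat it), so genuinely new input is needed; but the input you propose does not obviously exist. Your plan is to avoid the prime collision at $2$ by pairing $T$ with the odd-order part of $N_{\soc(G)}(T)$, yet that odd part can be trivial: $\psl(2,7)$ has a self-normalizing Sylow $2$-subgroup, so if $\soc(G)$ is a product of copies of $\psl(2,7)$ then $N_{\soc(G)}(T)=T$ is a $2$-group and there is nothing odd to alternate with. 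Moreover, since \emph{every} nonsoluble layer is entered through its Sylow $2$-subgroup, it is unclear how ``recording the prime of the bottom Fitting layer'' could maintain an alternation through the recursion. The facts that must actually be exploited --- $C_G(\soc(G))=1$, so $B_0$ acts faithfully on the socle, and $T$ is a \emph{full} Sylow $2$-subgroup of the socle --- are named in your sketch but never used in an argument. As written, the proposal establishes only $h\ge\la-1$ together with a programme for the last step; the last step is where the theorem lives, and it is missing.
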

\begin{theorem}\label{uuu} Every soluble group $G$ has a subgroup $J$ generated by a pair of conjugate elements such that $h(G)=h(J)$.
\end{theorem}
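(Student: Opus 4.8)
The plan is to first convert the hypothesis on soluble $2$-generator subgroups into a uniform bound on the Fitting height of \emph{every} soluble subgroup of $G$, and then feed this bound into Lemma~\ref{bbou} by way of a bound on $\lambda(G)$. All the serious work is carried by the results collected above, so the argument is short.

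First I would show that $h(S)\leq k$ for every soluble subgroup $S$ of $G$. Given such an $S$, Theorem~\ref{uuu} produces a subgroup $J=\langle a,a^g\rangle$ of $S$, generated by a pair of conjugate elements, with $h(J)=h(S)$. The point to get right is the compatibility of the two notions of conjugacy in play: the conjugating element $g$ lies in $S$, hence in $G$, so that $a$ and $a^g$ are conjugate in $G$ and $J=\langle a,a^g\rangle$ is a soluble subgroup of $G$ of exactly the type appearing in the hypothesis of the theorem. One does not need $a$ and $a^g$ to be conjugate in $G$ by an element outside $S$; conjugacy inside $S$ already places the pair in the range of the hypothesis. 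Therefore $h^*(J)\leq k$, and since $J$ is soluble this reads $h(J)\leq k$, whence $h(S)=h(J)\leq k$.

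Next I would bound the nonsoluble length. By Theorem~\ref{ju}, $\lambda(G)$ does not exceed the maximum Fitting height of soluble subgroups of $G$, which the previous step shows to be at most $k$; hence $\lambda(G)\leq k$. Finally I would apply Lemma~\ref{bbou} with $\lambda=\lambda(G)$ and $h=h^*(G)$: it yields a soluble subgroup $B$ of $G$ with $h(B)\geq (h+1-2^{\lambda})/2^{\lambda}$. Since $B$ is soluble, the first step gives $h(B)\leq k$, so $(h+1-2^{\lambda})/2^{\lambda}\leq k$, that is $h\leq (k+1)2^{\lambda}-1\leq (k+1)2^{k}-1$ because $\lambda\leq k$. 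This is exactly the bound recorded after the statement.

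The only genuinely new step is the first one, and its delicate feature is the interplay between the conjugacy supplied by Theorem~\ref{uuu} (inside the soluble subgroup) and the conjugacy demanded by the hypothesis (inside $G$). The main obstacle, were these ingredients not already available, would be precisely Theorem~\ref{uuu} itself: the fact that a soluble group attains its Fitting height on a subgroup generated by a single conjugate pair is what lets the hypothesis on conjugate pairs control \emph{all} soluble subgroups simultaneously. Granting that, the passage from the soluble-subgroup bound to $\lambda(G)\leq k$ and then to $h^*(G)\leq (k+1)2^{k}-1$ is purely mechanical, resting on Theorems~\ref{ju} and Lemma~\ref{bbou}.
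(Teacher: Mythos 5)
There is a genuine gap, and it is structural rather than local: your proposal does not address the statement at all. The statement to be proved is Theorem \ref{uuu} itself --- that every finite soluble group $G$ contains a subgroup $J=\langle a,a^g\rangle$ generated by a pair of conjugate elements with $h(J)=h(G)$ --- yet the very first step of your argument \emph{invokes} Theorem \ref{uuu} to produce such a $J$ inside an arbitrary soluble subgroup $S$. What you have written is, nearly verbatim, the paper's proof of Theorem \ref{c}: pick a soluble subgroup of maximal Fitting height, use Theorem \ref{uuu} to replace it by one generated by a conjugate pair, bound $\lambda(G)$ via Theorem \ref{ju}, and conclude with Lemma \ref{bbou} to get $h^*(G)\leq(k+1)2^k-1$. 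As an argument for Theorem \ref{c} it is correct --- including the point you rightly flag, that conjugacy of $a$ and $a^g$ inside $S$ already yields conjugacy in $G$, so the hypothesis applies --- but as an argument for Theorem \ref{uuu} it is circular and establishes nothing.

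Note also that the paper itself gives no proof of Theorem \ref{uuu}; it is quoted from \cite{eu10}, where it is established by a direct analysis of soluble groups (an induction on the Fitting height showing that the height is attained on a subgroup generated by two conjugate elements). A genuine proof attempt for this statement must engage with that soluble-group structure theory; none of the ingredients you use (Theorem \ref{ju}, Lemma \ref{bbou}) is relevant to it, since they concern nonsoluble length and generalized Fitting height of a possibly nonsoluble ambient group, whereas Theorem \ref{uuu} is a statement purely about finite soluble groups.
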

The proof of Theorem \ref{c} is now straightforward.
\begin{proof}[Proof of Theorem \ref{c}] Recall that $k$ is a positive integer and $G$ a finite group in which $h^*(\langle x,x^g\rangle)\leq k$ for all $x,g\in G$ such that $\langle x,x^g\rangle$ is soluble. We wish to show that $h^*(G)$ is $k$-bounded. Let $J\leq G$ be a soluble subgroup of maximal Fitting height. In view of Theorem \ref{uuu} we can choose $J$ in such a way that $J$ is generated by a pair of conjugate elements. Let $t=h(J)$ and $h=h^*(G)$. By Theorem \ref{ju}, we have $\lambda(G)\leq t$. Now Lemma \ref{bbou} shows that $$\frac{h+1-2^t}{2^t}\leq t.$$ From this we deduce that $h\leq(t+1)2^{t}-1$. So in particular $h$ is bounded by a function of $t$. Since $t\leq k$, the theorem follows.
\end{proof}

\bibliographystyle{amsplain}

\end{document}